\newcommand{\Pp}{\mathcal{P}}
\newcommand{\Z}{\mathbb {Z}}
\newcommand{\qbin}[2]{\genfrac{[}{]}{0pt}{}{#1}{#2}}
\newcommand{\gp}[3]{\qbin{#1}{#2}_{#3}}
\newcommand{\Bin}{\mathrm{Bin}}
\newcommand{\Cov}{\mathrm{Cov}}
\newcommand{\inv}{\mathrm{inv}}
\newtheorem{theorem}{Theorem}
\newtheorem{corollary}[theorem]{Corollary}
\newtheorem{lemma}[theorem]{Lemma}
\theoremstyle{remark}
\newtheorem{remark}[theorem]{Remark}
\numberwithin{theorem}{section}
\numberwithin{equation}{section}
\title[A Refinement of the Binomial Distribution]{A Refinement of the Binomial Distribution\\Using the 
Quantum Binomial Theorem}
\author{Andrew V. Sills}
\email{asills@georgiasouthern.edu}
\address{Department of Mathematical Sciences,
Georgia Southern University, Statesboro and Savannah, Georgia, USA}
\date{\today}
\begin{document}
\maketitle

\begin{abstract}
$q$-analogs of special functions, including hypergeometric functions,
play a central role in mathematics and have numerous applications in
physics.  In the theory of probability, $q$-analogs of various probability
distributions have been introduced over the years, including the binomial
distribution.  Here, I propose a new refinement of the binomial distribution
by way of the quantum binomial theorem (also known as the 
the noncommutative $q$-binomial 
theorem), where the $q$ is a formal variable in which information
related to the sequence of successes and failures in the
underlying binomial experiment is encoded 
in its exponent. 
\end{abstract}

\section{Background and motivation}
Many of the standard mathematical objects used in probability and statistics 
(e.g. factorials, the gamma function, the exponential function, 
the beta function, the binomial
co\"efficients, etc.) have well-known $q$-analogs that play a central r\^ole
in the theory of special functions~\citep[see][]{AAR99} and basic hypergeometric series
~\citep[see][]{GR04}.
A $q$-analog of a mathematical object $A$ is a function $f(q)$ such that
$f(1) = A$ or, failing that, at least
$\lim_{q\to 1} f(q) = A$ and in some imprecise sense, $f(q)$ retains some of the
remarkable properties possessed by $A$.


A number of $q$-analogs of the binomial distribution have been introduced
over the years,~\citep[see][]{D81,K87,KK91,S94,K02,K12}.  All of these
are based on the standard commutative version of the $q$-binomial
theorem~\citep[see, e.g.,][p. 488, Theorem 10.2.1]{AAR99}.

Here we propose a refinement of the binomial
distribution that draws its inspiration from the \emph{quantum}
binomial theorem, also known as the 
\emph{noncommutative} $q$-binomial
theorem~\citep{P50,S53}.  It retains the ordinary binomial experiment
setting (counting successes in $n$ independent Bernoulli trials), but the 
inclusion of the additional parameter $q$ encodes additional 
combinatorial information.  Unlike the $q$-analogs 
of various probability distributions well known in the literature, where $q$ can
meaningfully assume a numerical value within a specified range, the
$q$ presented herein is strictly a \emph{formal variable} with
associated combinatorial 
information encoded in its exponent. 
The classical binomial
probability distribution is recovered when the formal variable $q$ is set equal to unity
in our construction.

   In section 2, we recall some basic facts about integer partitions, state and prove
a generalization of the quantum binomial theorem, and then derive the 
quantum binomial theorem and the classical binomial theorem as corollaries.
In section 3, the binomial probability distribution is discussed, and a refinement
inspired by the quantum binomial theorem is motivated.  In section 4, the
exponent of $q$ from the previous section is interpreted in its new r\^ole as a
random variable.   In section 5, our refined binomial distribution is given,
where we now consider a joint probability distribution of two random variables
describing the original setting of $n$ independent Bernoulli trials with constant
probability of success $\pi$ on each trial.  Marginal and conditional distributions
are derived along with various moments.  In section 6, some concluding words
are offered, suggesting possibilities for further research.

\section{Introduction}
Let 
\begin{multline*}
\Pp_{k,m}  := \\ \{ (\lambda_1, \lambda_2, \dots, \lambda_k) :
  m \geq \lambda_1 \geq \lambda_2 \geq \cdots \geq \lambda_k \geq 0 
  \mbox{ and each $\lambda_j \in \Z$} \}. 
\end{multline*}
Thus $\Pp_{k,m}$ can be thought of as the set of all partitions with at most
$k$ parts, none of which is greater than $m$.  If a given partition has 
strictly less than $k$
parts, we simply pad on the right with zeros. 
For example, 
\begin{multline*}
  \Pp_{3,2} = \{ (0,0,0), (1,0,0), (1,1,0), (2,0,0),  \\
  (1,1,1), (2,1,0), (2,1,1), (2,2,0),
  (2,2,1), (2,2,2)\} .
\end{multline*}
Equivalently, $\Pp_{k,m}$ can be visualized as the
set of all Ferrers diagrams that fit inside a rectangle $k$ units high and $m$ units
wide~\citep[see][p. 6 ff]{A76}.  
If $\lambda = (\lambda_1, \lambda_2, \dots, \lambda_k)$, we let
$|\lambda| := \lambda_1 + \lambda_2 + \cdots + \lambda_k$ and call this the
\emph{size} (or \emph{weight}) of $\lambda$.  Notice that $\Pp_{k,m}$ consists of
partitions with sizes from $0$ to $mk$ inclusive.  Later, we will need to consider
only those partitions of size $t$ in $\Pp_{k,m}$, and note this set as $\Pp_{k,m}(t)$.

 Let $x$ and $y$ be indeterminates that do not commute under multiplication.
For $\lambda = (\lambda_1, \dots, \lambda_k) \in\Pp_{k,n-k}$, let $Q^\lambda$  denote the \textbf{operator} that permutes
the factors of $x^{n-k} y^k$ by the product of transpositions 
\begin{equation} Q^\lambda:=\prod_{j=1}^k (n-k+j, n-k+j - \lambda_j). \label{prodtrans} \end{equation}
The transposition, written in cycle notation, 
$(i,j)$ applied to $x^{n-k} y^k$ means we swap the $i$th and $j$th
factors of $x^{n-k} y^k$.  The transpositions are not in general 
disjoint and therefore their product
is not commutative.  We interpret the order of factors in~\eqref{prodtrans} as
the $j=1$ factor is applied first (rightmost), 
the $j=2$ factor is applied second (immediately to the left of
the $j=1$ factor), etc.

For example, with $n-k = 5$, $k=3$, we have
\begin{align*}
Q^{(3,1,0)} x^5 y^3 & =  (8,8)(7,6)(6,3) (x x x x x y y y) \\
                             & = (8,8)(7,6) (x x y x x x y y )\\
                             & = (8,8) (x x y x x y x y) \\
                            & =  x x y x x y x y . 
\end{align*}   

  For a bijection between the partition $\lambda\in\Pp_{k,n-k}$ and the 
permutation of $x^{n-k} y^k$ 
represented by $Q^\lambda x^{n-k} y^k$, see~\citet[p. 40, Theorem 3.5]{A76}.

\begin{theorem}[generalized quantum binomial theorem]\label{gqbt}
For non-commuting indeterminates $x$ and $y$ and the operator $Q^{\lambda}$
defined in~\eqref{prodtrans},
  \begin{equation} \label{gqbteq}
    (x+y)^n = \sum_{k=0}^n \sum_{\lambda\in\Pp_{k,n-k} } Q^{\lambda} x^{n-k} y^k .
  \end{equation}  
\end{theorem}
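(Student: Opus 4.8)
The plan is to expand the left-hand side of~\eqref{gqbteq} directly and to match it term-by-term with the right-hand side. Since $x$ and $y$ do not commute, expanding $(x+y)^n$ yields the sum of all $2^n$ words of length $n$ in the alphabet $\{x,y\}$, one word for each way of selecting $x$ or $y$ from each of the $n$ factors. I would group these words by $k$, the number of occurrences of $y$, writing $(x+y)^n = \sum_{k=0}^n W_k$, where $W_k$ denotes the sum of the $\binom{n}{k}$ words containing exactly $k$ copies of $y$. Because each $Q^\lambda$ is a product of transpositions, it merely permutes the positions of the $n$ factors and hence preserves their multiset of letters; thus $Q^\lambda x^{n-k} y^k$ is again a word with $n-k$ copies of $x$ and $k$ copies of $y$. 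The theorem therefore reduces to showing that, for each fixed $k$, the map $\lambda \mapsto Q^\lambda x^{n-k} y^k$ is a bijection from $\Pp_{k,n-k}$ onto the set of all words having exactly $k$ copies of $y$.

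The crux of the argument -- and the step I expect to be the main obstacle, since the transpositions in~\eqref{prodtrans} are not disjoint and must be applied in the prescribed order -- is to pin down exactly where $Q^\lambda$ sends the $k$ copies of $y$. Writing $T_j := (n-k+j,\, n-k+j-\lambda_j)$ for the $j$th transposition and $p_j := n-k+j-\lambda_j$ for its target position, I would prove by induction on $j$ that, after applying $T_1, \dots, T_j$, the copies of $y$ occupy precisely the positions $p_1 < \cdots < p_j$ together with the as-yet-untouched positions $n-k+j+1, \dots, n$. The key points are that $T_i$ involves only positions at most $n-k+i$, so when $T_j$ acts the factor at position $n-k+j$ is still the original $j$th copy of $y$; and that the partition inequalities $\lambda_1 \ge \cdots \ge \lambda_k$ translate into $p_1 < p_2 < \cdots < p_k$, which guarantees that the target position $p_j$ currently holds an $x$. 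Hence $T_j$ moves the $j$th copy of $y$ cleanly to position $p_j$ without disturbing the copies already placed, and the induction closes.

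With this lemma established, the conclusion follows quickly. The assignment $\lambda \mapsto (p_1, \dots, p_k)$ is a bijection from $\Pp_{k,n-k}$ onto the strictly increasing sequences $1 \le p_1 < \cdots < p_k \le n$, that is, onto the $k$-element subsets of $\{1,\dots,n\}$, with explicit inverse $\lambda_j = n-k+j-p_j$; indeed the conditions $n-k \ge \lambda_1 \ge \cdots \ge \lambda_k \ge 0$ correspond exactly to $1 \le p_1 < \cdots < p_k \le n$. Since a word with $k$ copies of $y$ is completely determined by the set of positions of those copies, this shows that $\lambda \mapsto Q^\lambda x^{n-k} y^k$ produces each of the $\binom{n}{k}$ words with exactly $k$ copies of $y$ once and only once, so that $\sum_{\lambda \in \Pp_{k,n-k}} Q^\lambda x^{n-k} y^k = W_k$. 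Summing over $k$ then recovers $(x+y)^n$ and establishes~\eqref{gqbteq}. As a consistency check one has $|\Pp_{k,n-k}| = \binom{n}{k}$ and $\sum_{k=0}^n \binom{n}{k} = 2^n$, matching the total number of words; the underlying correspondence is the bijection recorded in~\citet[p.~40, Theorem 3.5]{A76}.
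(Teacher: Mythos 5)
Your proof is correct, but it takes a genuinely different---and more self-contained---route than the paper. The paper's own proof of Theorem~\ref{gqbt} is essentially one line: it takes the Potter--Sch\"utzenberger result (Corollary~\ref{ncqbt}, regarded as known from \citet{P50} and \citet{S53}) together with the first equality of~\eqref{key}, namely $\gp nkq = \sum_{\lambda\in\Pp_{k,n-k}} q^{|\lambda|}$, and lifts the $q$-identity to the operator level, relying implicitly on the bijection of \citet[p.~40, Theorem 3.5]{A76} (cited just before the theorem) which identifies $Q^\lambda x^{n-k}y^k$ with the permutation of $x^{n-k}y^k$ having $|\lambda|$ inversions. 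You instead prove the identity from first principles: expand $(x+y)^n$ into all $2^n$ words, and show for each fixed $k$ that $\lambda \mapsto Q^\lambda x^{n-k}y^k$ is a bijection from $\Pp_{k,n-k}$ onto the words with exactly $k$ letters $y$, by the induction tracking the target positions $p_j = n-k+j-\lambda_j$; your key observations---that $T_i$ only touches positions at most $n-k+i$, so position $n-k+j$ still holds the $j$th copy of $y$ when $T_j$ acts, and that $\lambda_1 \ge \cdots \ge \lambda_k$ forces $p_1 < \cdots < p_k$, so each target position holds an $x$ (the case $\lambda_j = 0$ being a harmless identity swap)---are exactly what is needed, and the inverse map $\lambda_j = n-k+j-p_j$ is verified correctly. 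In effect you have supplied a proof of the very bijection the paper only cites, which is a real gain: the paper's logical structure is somewhat circular in appearance (Corollary~\ref{ncqbt} is stated as a consequence of Theorem~\ref{gqbt}, while Theorem~\ref{gqbt} is proved from Corollary~\ref{ncqbt}, with the latter's independent proof outsourced to the literature), whereas your argument makes Theorem~\ref{gqbt} the genuine starting point, from which Corollary~\ref{ncqbt} and the classical binomial theorem then follow as true corollaries. What the paper's approach buys is brevity and a clear connection to known results; what yours buys is a self-contained development at the cost of a page of bookkeeping.
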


Note that the operator $Q^\lambda$ acts as a generalization of the 
formal expression $q^{|\lambda|}$.
Before proving Theorem~\ref{gqbt}, we will provide some context and
motivation.
If we replace the operator $Q^{\lambda}$ with the $|\lambda|$th power
of the indeterminate $q$ (note that $q$ commutes with both $x$ and $y$), we
obtain the quantum binomial theorem\footnote{The 
name ``quantum binomial theorem'' dates back to
at least~\citet{BDR02}.  Elsewhere in the literature (see, e.g., \citet[p. 485]{AAR99}) 
this result is called the
``noncommutative $q$-binomial theorem''.  Other authors just call it the
``$q$-binomial theorem,'' but this risks possible confusion with other 
(commutative) 
results also known by that name.}, usually attributed to
M. P.~\citet{S53}, but note also an essentially equivalent result
due to H. S. A.~\citet{P50}.   To state this theorem, we need the usual
$q$-binomial co\"efficient, also known as the \emph{Gaussian polynomial}:
\begin{equation} \label{gp}
 \gp nkq := \frac{(1-q^{n-k+1})(1-q^{n-k+2}) \cdots (1-q^{n})}{(1-q)(1-q^2)\cdots
  (1-q^k)},
\end{equation} for $0\leq k \leq n$, and $0$ otherwise.  It is well known that
\eqref{gp} is a polynomial in $q$ of degree $k(n-k)$, satisfies $q$-analogs
of the Pascal triangle recurrence, and is the generating function for the function
that counts the number of members of
$\Pp_{k,n-k}$ ~\cite[see][Chapter 3]{A76} of a given size:

\begin{equation}\label{key}
  \gp nkq = \sum_{\lambda\in\Pp_{k,n-k}} q^{|\lambda|} =
  \sum_{j=0}^{k(n-k)} \#\Pp_{k,n-k}(j) q^j,
\end{equation}
where $\#\Pp_{k,m}(j)$ is the number of partitions of size $j$ into at most $k$ parts, 
with each part at most $m$.

Recall that the Potter--Sch\"utzenberger quantum binomial
theorem may be stated as follows:
\begin{corollary}[Quantum binomial theorem] \label{ncqbt}
If $yx = qxy$, and $n$ is a nonnegative integer, then
\begin{equation} \label{ncqbteq}  
(x+y)^n = \sum_{k=0}^n \gp nkq x^{n-k} y^k. 
\end{equation}
\end{corollary}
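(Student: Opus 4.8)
The plan is to obtain Corollary~\ref{ncqbt} by specializing Theorem~\ref{gqbt} to the setting in which $x$ and $y$ obey the single relation $yx = qxy$ with $q$ central. In the free (fully noncommutative) setting of~\eqref{gqbteq}, each summand $Q^\lambda x^{n-k} y^k$ is a word of length $n$ in the alphabet $\{x,y\}$ containing exactly $n-k$ copies of $x$ and $k$ copies of $y$. First I would record the effect of the relation on such words: using $yx = qxy$ to move a single $y$ rightward past a single $x$ replaces the adjacent factor $yx$ by $qxy$, so repeatedly applying the relation rewrites an arbitrary word $w$ into the normal form $x^{n-k}y^k$ at the cost of one factor of $q$ per \emph{inversion}. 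Here an inversion of $w$ is a pair of positions $(i,j)$ with $i<j$, $w_i = y$, and $w_j = x$; writing $\inv(w)$ for their number and using centrality of $q$, one gets $w = q^{\inv(w)}\, x^{n-k} y^k$.

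The crux is therefore to match the two statistics, i.e.\ to establish
\[
  Q^\lambda x^{n-k} y^k = q^{|\lambda|}\, x^{n-k} y^k
  \qquad\text{for each } \lambda \in \Pp_{k,n-k},
\]
equivalently $\inv\bigl(Q^\lambda x^{n-k}y^k\bigr) = |\lambda|$. This is the step I expect to be the main obstacle, and it is exactly where the partition-to-word bijection cited from \citet[p.~40, Theorem~3.5]{A76} is needed. The guiding picture is that $Q^\lambda$ moves the $j$th $y$ a net $\lambda_j$ positions to the left, past exactly $\lambda_j$ copies of $x$, so that this $y$ comes to stand to the left of $\lambda_j$ of the $x$'s and contributes $\lambda_j$ inversions; summing over $j$ yields $\inv = \lambda_1 + \cdots + \lambda_k = |\lambda|$. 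The subtlety to be handled carefully is that the transpositions in~\eqref{prodtrans} are not disjoint, so I would use their prescribed right-to-left order together with the monotonicity $\lambda_1 \geq \cdots \geq \lambda_k$ to argue that successive swaps relocate each $y$ into its intended slot, the $y$'s never cross one another, and no inversion is counted twice or lost. The displayed computation $Q^{(3,1,0)} x^5 y^3 = xxyxxyxy$, whose output word has precisely four inversions (one checks $|{(3,1,0)}| = 4$), exhibits the mechanism in miniature.

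With the statistic identity in hand the remainder is bookkeeping. Substituting $Q^\lambda x^{n-k}y^k = q^{|\lambda|} x^{n-k}y^k$ into~\eqref{gqbteq} and moving the central scalars to the front gives
\begin{align*}
  (x+y)^n
    &= \sum_{k=0}^n \sum_{\lambda\in\Pp_{k,n-k}} q^{|\lambda|}\, x^{n-k} y^k \\
    &= \sum_{k=0}^n \left( \sum_{\lambda\in\Pp_{k,n-k}} q^{|\lambda|} \right) x^{n-k} y^k .
\end{align*}
Finally I would invoke the generating-function identity~\eqref{key}, which evaluates the inner sum as $\sum_{\lambda\in\Pp_{k,n-k}} q^{|\lambda|} = \gp nkq$, turning the right-hand side into $\sum_{k=0}^n \gp nkq\, x^{n-k} y^k$. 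This is~\eqref{ncqbteq}, completing the proof.
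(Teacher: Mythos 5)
Your proposal is correct and takes essentially the same route as the paper: the paper's proof of Corollary~\ref{ncqbt} is the single remark that one obtains it from Theorem~\ref{gqbt} by ``replacing the operator $Q^{\lambda}$ with the formal variable $q^{|\lambda|}$ and using~\eqref{key},'' which is exactly your specialization of~\eqref{gqbteq} to the quotient where $yx=qxy$ followed by the evaluation $\sum_{\lambda\in\Pp_{k,n-k}}q^{|\lambda|}=\gp nkq$. The only difference is one of detail: you make explicit the statistic-matching step $Q^{\lambda}x^{n-k}y^{k}=q^{|\lambda|}x^{n-k}y^{k}$ via the inversion count, justified by the same bijection of \citet[p.~40, Theorem~3.5]{A76} that the paper itself cites for this purpose.
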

To obtain Corollary~\ref{ncqbt} from Theorem~\ref{gqbt}, we replace
the operator $Q^\lambda$ with the formal variable $q^{|\lambda|}$ and
use~\eqref{key}.

Of course, the $q=1$ case of Corollary~\ref{ncqbt} 
(so that multiplication of $x$ and $y$ is now commutative)
is the classical binomial theorem:
\begin{corollary}[classical binomial theorem] For nonnegative integer $n$,
\begin{equation} \label{bteq}  
 (x+y)^n = \sum_{k=0}^n \binom nk x^{n-k} y^k.
\end{equation}

\begin{proof}[Proof of Theorem~\ref{gqbt}]
Observe that Eq.~\eqref{key} provides the key link between
Theorem~\ref{gqbt} and Corollary~\ref{ncqbt}; in fact the first equality
of~\eqref{key} is all we need to establish Theorem~\ref{gqbt} given
Corollary~\ref{ncqbt}, the result of Potter--Sch\"utzenberger.
\end{proof}

\end{corollary}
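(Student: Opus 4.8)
The plan is to obtain \eqref{bteq} as the $q=1$ specialization of the quantum binomial theorem, Corollary~\ref{ncqbt}, exactly as the surrounding text indicates. First I would invoke Corollary~\ref{ncqbt}: under the relation $yx = qxy$, one has $(x+y)^n = \sum_{k=0}^n \gp nkq x^{n-k} y^k$. Setting $q=1$ collapses the defining relation to $yx = xy$, so that $x$ and $y$ now commute. This is precisely the ordinary commutative hypothesis of the classical theorem, and it is what legitimizes writing a single monomial $x^{n-k}y^k$ without having to track the order of its factors. With commutativity in hand, the entire content of the proof reduces to evaluating each coefficient $\gp nk1$.

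The one genuine subtlety is this evaluation, since the rational expression \eqref{gp} presents a $0/0$ indeterminacy at $q=1$. I would resolve it by taking the limit factor by factor: each ratio contributes $\lim_{q\to 1}\frac{1-q^j}{1-q} = j$, so that $\lim_{q\to 1}\gp nkq = \frac{n(n-1)\cdots(n-k+1)}{k!} = \binom nk$. A cleaner alternative that sidesteps the indeterminacy altogether is to use the polynomial form recorded in the first equality of \eqref{key}, namely $\gp nkq = \sum_{\lambda\in\Pp_{k,n-k}} q^{|\lambda|}$, which carries no denominator and may be evaluated at $q=1$ directly to give $\gp nk1 = \#\Pp_{k,n-k}$, the total number of partitions fitting in a $k\times(n-k)$ box; one then identifies this count with $\binom nk$ via the standard bijection between such Ferrers diagrams and $k$-subsets of an $n$-element set, citing \citet[Chapter 3]{A76} as was already done for \eqref{key}.

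Substituting $\gp nk1 = \binom nk$ into the specialized identity then yields $(x+y)^n = \sum_{k=0}^n \binom nk x^{n-k} y^k$, which is exactly \eqref{bteq}. The main obstacle is nothing deeper than this $q=1$ bookkeeping, that is, confirming simultaneously that the noncommutativity relation degenerates to commutativity and that the Gaussian coefficient specializes to the binomial coefficient; everything else is an immediate substitution, so the argument is expected to be short.
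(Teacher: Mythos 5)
Your proposal is correct and follows essentially the same route as the paper, which likewise presents \eqref{bteq} as the immediate $q=1$ specialization of Corollary~\ref{ncqbt}, with commutativity recovered from $yx=qxy$ at $q=1$. The only difference is one of detail: you spell out the evaluation $\gp{n}{k}{1}=\binom{n}{k}$ (by a factorwise limit, or via the first equality of \eqref{key} together with the count $\#\Pp_{k,n-k}=\binom{n}{k}$), a step the paper treats as implicit since \eqref{gp} is already noted to be a polynomial in $q$.
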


\section{On the binomial distribution}
In the binomial experiment, we have $n$ independent Bernoulli
trials where the probability of a ``success'' on each trial 
is some fixed value $\pi$, $0<\pi<1$, and
the probability of a ``failure'' is $1-\pi.$
\begin{remark}{We acknowledge immediately that there is an
unfortunate conflict
between the standard notations of probability and that of the theory of $q$-series.
Accordingly, here we \emph{avoid} using $p$ for the probability of 
a success a Bernoulli trial (since $p(n)$ is a standard notation for the number of partitions
of $n$) and avoid $q = 1-p$ for the probability of a failure
on a Bernoulli trial, as ``$q$'' is used in the sense of $q$-series. 
}
\end{remark} 

If $Y$ is the random variable that counts
the number of successes encountered during the $n$ 
independent Bernoulli trials, then
we say $Y$ is a binomial random variable with parameters $n$ and $\pi$, 
writing $Y\sim\Bin(n,\pi)$ for short.  Note that 
\[ P(Y=k) = \binom nk (1-\pi)^{n-k} {\pi}^k, \]  for $k=0,1,2,\dots, n$, and that 
\[ \sum_{k=0}^n P(Y=k) = 1 \] 
follows immediately by taking $x=1-\pi$ and $y=\pi$ in~\eqref{bteq}.

Notice, however, that if we apply the same interpretation of $x$ and $y$
(i.e. probabilities of failure and success respectively) to the context of 
Theorem~\ref{gqbt}, then we are using the extra information preserved to
track each possible sequence of $n$ Bernoulli trials.  For example, consider
the $n=4$ case.  
There is a one-to-one correspondence between terms generated by the
left member of~\eqref{gqbteq} and those generated by the right member as
follows, where F denotes failure and S denotes success, grouped by the
values of $Y = 0, 1, 2, 3, 4$:

\begin{center}
\begin{tabular}{| c |c| c|}
\hline
Outcome & LHS summand & corresponding RHS summand\\
\hline\hline
FFFF & $xxxx$ & $Q^{\emptyset} x^4 = () x^4$\\
\hline
FFFS & $xxxy$ & $Q^{(0)} x^3 y = (4,4) x^3 y $\\
FFSF & $xxyx$ & $Q^{(1)} x^3 y = (4,3) x^3 y$\\
FSFF & $xyxx$ & $Q^{(2)} x^3 y = (4,2) x^3 y$\\
SFFF & $yxxx$ & $Q^{(3)} x^3 y = (4,1) x^3 y$ \\
\hline
FFSS & $xxyy$ & $Q^{(0,0)} x^2 y^2 = (4,4)(3,3) x^2 y^2 $\\
FSFS & $xyxy$ & $Q^{(1,0)} x^2 y^2 = (4,4)(3,2) x^2 y^2$\\
SFFS & $yxxy$ & $Q^{(2,0)} x^2 y^2 = (4,4)(3,1) x^2 y^2$\\
FSSF & $xyyx$ & $Q^{(1,1)} x^2 y^2 = (4,3)(3,2) x^2 y^2$\\
SFSF & $yxyx$ & $Q^{(2,1)} x^2 y^2 =  (4,3)(3,1) x^2 y^2$ \\
SSFF & $yyxx$ & $Q^{(2,2)} x^2 y^2 =   (4,2)(3,1) x^2 y^2$\\
\hline
FSSS & $xyyy$ & $Q^{(0,0,0)} xy^3 = (4,4)(3,3)(2,2)x y^3 $\\
SFSS & $yxyy$ & $Q^{(1,0,0)} xy^3 = (4,4)(3,3)(2,1) xy^3$ \\
SSFS & $yyxy$ & $Q^{(1,1,0)} xy^3 = (4,4)(3,2)(2,1) xy^3$ \\
SSSF & $yyyx$ & $Q^{(1,1,1)} xy^3 = (4,3)(3,2)(2,1) xy^3$\\
\hline
SSSS & $yyyy$ & $Q^{(0,0,0,0)} y^4 = (4,4)(3,3)(2,2)(1,1) y^4$
\\ \hline \hline
\end{tabular}
\end{center}

If we consider the binomial experiment from the perspective of 
Corollary~\ref{ncqbt}, we have more information than in the ordinary 
binomial distribution, but not always enough to uniquely identify each summand in
the right member of~\eqref{ncqbteq} with a specific sequence of successes and
failures.  (Notice, e.g., in the table below that
outcomes SFFS and FSSF both contribute a factor of $q^2 x^2 y^2$ to
the sum, and thus cannot be distinguished at this level of refinement.
To remedy this, we proposed Theorem~\ref{gqbt}.) 
 
 Once again, consider the $n=4$ case in detail:
\begin{center}
\begin{tabular}{| c | c |c| c |}
\hline
& & & corresponding\\
$Y$ & Outcome & LHS summand & RHS summand\\
\hline\hline
0 &FFFF & $xxxx$ & $ x^4$\\
\hline
&FFFS & $xxxy$ & $ x^3 y $\\
1&FFSF & $xxyx$ & $q x^3 y$\\
&FSFF & $xyxx$ & $q^{2} x^3 y $\\
&SFFF & $yxxx$ & $q^3 x^3 y$ \\
\hline
&FFSS & $xxyy$ & $x^2 y^2  $\\
&FSFS & $xyxy$ & $q x^2 y^2 $\\
2&SFFS & $yxxy$ & $q^2 x^2 y^2 $\\
&FSSF & $xyyx$ & $q^2 x^2 y^2 $\\
&SFSF & $yxyx$ & $q^3 x^2 y^2$  \\
&SSFF & $yyxx$ & $q^4 x^2 y^2$\\
\hline
&FSSS & $xyyy$ & $xy^3$\\
3&SFSS & $yxyy$ & $q xy^3$ \\
&SSFS & $yyxy$ & $q^2 xy^3$ \\
&SSSF & $yyyx$ & $q^3 xy^3 $\\
\hline
4&SSSS & $yyyy$ & $y^4$
\\ \hline \hline
\end{tabular}
\end{center}

\begin{remark}
Note that the noncommutivity of $x$ and $y$ is essential here: 
the noncommutivity is that which allows us
track information relating to the where the successes and failures occur in 
the underlying
binomial experiment.
\end{remark}

Motivated by the similarity in appearance between the pmf for $Y\sim\Bin(n,\pi)$,
\[ P(Y=k) = \binom nk (1-\pi)^{n-k} \pi^k, \] which occurs as the generic summand
in right member of~\eqref{bteq} with $x=1-\pi$ and $y=\pi$, and 
the expression \[ \gp nkq (1-\pi)^{n-k} \pi^k, \] we examine whether the latter
(which is the generic summand in the right member of~\eqref{ncqbteq} with
$x$ replaced by $1-\pi$ and $y$ replaced by $\pi$) can be utilized to generalize the binomial distribution in
some useful sense.

Let us informally define a
``$q$-generalized probability mass function'' $P_q(Y = k)$ as one where
the sum over its support is in general not $1$, but rather a $q$-analog of $1$,
i.e. a function of $q$ that evaluates to $1$ when $q = 1$.   Additionally, 
although we have already emphasized the role of $q$ as a formal variable, we
could safely impose the additional condition that $P_q(Y=k) \geq 0$ for real
$q$, $0 < q < 1$, to mimic the classical condition that a probability mass
function must be nonnegative.

For the case at hand, we wish to have
\begin{equation} \label{qpmf}
P_q(Y = k) = \gp nkq (1-\pi)^{n-k} \pi^k . \end{equation}
Again, the expression $P_q(Y=k)$
\emph{fails to retain} the property from
the classical $q=1$ case that summing over
the support yields unity.  In fact, 
\[ \sum_{k=0}^n P_q(Y=k) = 
\sum_{k=0}^n \gp nkq (1-\pi)^{n-k} \pi^k \] is a polynomial $s(q)$ in $q$ with
the property that $s(1) = 1$.
Also, 
\[ \gp nkq (1-\pi)^{n-k} \pi^k \geq 0 \] for all $0<q<1$, although we prefer
to not evaluate the preceding expression for different values of $q$, 
with the exception of $q=1$, and then only when we wish to pass from
the proposed $q$-generalization to the classical case.

  For example, in the $n=4$ case, 
\begin{equation} \label{PqY2} 
P_q(Y=2) = (1-\pi)^2 \pi^2 (1 + q + 2 q^2 + q^3 + q^4) .
\end{equation}
If we set $q=1$, we recover the fact that if $Y\sim\Bin(4,\pi)$, 
$P(Y=2) = 6(1-\pi)^2 \pi^2$.
However, if we leave the $q$ unevaluated, the polynomial $P_q(Y=2)$ effectively
segregates the $\binom 42 = 6$ outcomes with $k = 2$ successes and 
$n-k = 2$ failures into
$k(n-k) + 1 = 5$ subcategories according to where the $n-k = 2$ failures occur in 
relation to the $k = 2$ successes in the sequence of $n = 4$ trials.  
This phenomenon will be explored further in the next section.


\begin{remark}
Another connection with the $q$-series literature is as follows.  The
\emph{Rogers--Szeg\H{o} polynomials} $H_n(q; z)$,
(see, e.g., \citet[pp. 49--50]{A76}), a family of
polynomials orthogonal on the unit circle, are defined as
\begin{equation} \label{RSP}
  H_n(q;z) := \sum_{k=0}^n \gp nkq z^k.
\end{equation}
Thus the sum over the support of the $q$-generalized pmf is
\begin{equation} \label{RSsum}  \sum_{k=0}^n P_q(Y=k) = 
\sum_{k=0}^n \gp nkq (1-\pi)^{n-k} \pi^k = (1-\pi)^n H_n\left(q; \frac{\pi}{1-\pi}\right). 
\end{equation}
\end{remark}
 An anonymous referee pointed out that by dividing through by the right member of
\eqref{RSsum}, one would obtain a legitimate pmf, i.e. for fixed parameters
$n$, $\pi$, and $q$,
let 
\[ P(Y=k) = \frac{  \gp nkq (1-\pi)^{n-k} \pi^k }{ (1-\pi)^n H_n(q; \pi/(1-\pi))} =
\gp nkq \frac{ \pi^k }{ (1-\pi)^k H_n (q; \pi/(1-\pi)) } ,\] and then it must be the
case that \[ \sum_{k=0}^n P(Y=k) = 1.  \]  However, the referee went on to observe
that ``[u]nfortunately, this simple rectification ruined the entire model; the random
variable $Y$ does not count, anymore, the number of successes in a sequence
of $n$ independent Bernoulli trials, with constant success probability.''

\section{Interpretation of the exponent of $q$}

A precise interpretation of the term $q^t x^{n-k} y^k$ may be given as follows:
consider an outcome of a binomial experiment with $n$ trials and $k$
successes. 
Let $s_j$ count the number of failures that occur after the $j$th success, and
let  \[ t:= \sum_{j=1}^k s_j. \] 
This outcome will be represented in the right member of~\eqref{ncqbteq} by
the term $ q^t x^{n-k} y^k. $   We can think of $t = \log_q q^t$ as
a weighted count of failures that occur after successes.  

  Another equivalent way of thinking of $t$ is as the number of inversions in
a permutation of a sequence of $n-k$ $x$'s followed by $k$ $y$'s.
For example, consider the permutation $xxyxxyxy$ of $x^5 y^3 = xxxxxyyy$.
As shown earlier, $Q^{(3,1,0)} x^5 y^3 = xxyxxyxy$.  The inversions in a
permutation are pairs of a $y$ occurring before an $x$: these pairs are
in the third and fourth entry, the third and fifth entry, the third and seventh, 
and finally the sixth and seventh; four such inversions in all.  The number
of inversions in a given permutation corresponds to the size of the 
indexing partition; the size of $(3,1,0)$ is $4$.  
  The number of permutations of $x^{n-k} y^k$ containing exactly $t$ inversions
 is denoted $\inv(k, n-k; t)$, and by~\citet[p. 40, Theorem 3.5]{A76}, 
 \[ \inv(n-k, k ; t) = \#\Pp_{k, n-k}(t). \]
 Accordingly, those who prefer permutations to partitions may wish to
replace all
subsequent references to ``$\#\Pp_{k,n-k}(t)$'' by ``$\inv(n-k,k;t)$''.


If $S_j$ denotes the random variable that counts the number of failures after the $j$th 
success in a binomial experiment with $n$ independent
Bernoulli trials and probability of 
success equal to $\pi$, let 
\[ T:= \sum_{j=1}^k S_j. \]

\begin{equation} \label{pmfT} 
P(T=t) = \sum_{k=0}^n \#\Pp_{k,n-k}(t) (1-\pi)^{n-k} \pi^k, \end{equation} for 
$t=0,1,2,\dots, \lfloor n^2/4 \rfloor $;  and $0$, otherwise.
Note that the way we calculate $P(T=t)$ is to observe the exponent on $q$,
and then set $q=1$.  The order of these operation matters, because if we
set $q=1$ first, then the exponent on $q$ becomes inaccessible.

As an example, here is the previous table ($n=4$ case of the binomial
experiment) grouped by values $t$ of $T$, rather than
by the values of $Y$:
\begin{center}
\begin{tabular}{| c | c |c| c|}
\hline
 & & & corresponding\\
$t$ & Outcome & LHS summand & RHS summand\\
\hline\hline
       & FFFF & $xxxx$ & $ x^4$\\
       & FFFS & $xxxy$ & $ x^3 y $\\
$0$ & FFSS & $xxyy$ & $x^2 y^2  $\\
& FSSS & $xyyy$ & $xy^3$\\
& SSSS & $yyyy$ & $y^4$\\
\hline
& FFSF & $xxyx$ & $q x^3 y$\\
1 & FSFS & $xyxy$ & $q x^2 y^2 $\\
& SFSS & $yxyy$ & $q xy^3$ \\
\hline
& FSFF & $xyxx$ & $q^2 x^3 y $\\
2 &SFFS & $yxxy$ & $q^2 x^2 y^2 $\\
& FSSF & $xyyx$ & $q^2 x^2 y^2 $\\
& SSFS & $yyxy$ & $q^2 xy^3$ \\
\hline
& SFFF & $yxxx$ & $q^3 x^3 y$ \\
$3$ & SFSF & $yxyx$ & $q^3 x^2 y^2$  \\
& SSSF & $yyyx$ & $q^3 xy^3 $\\
\hline
$4$ & SSFF & $yyxx$ & $q^4 x^2 y^2$\\
 \hline \hline
\end{tabular}
\end{center}

\begin{remark}
Observe that for a given value $t$ of $T$, the summation bounds
\[ \sum_{k=0}^n \#\Pp_{k,n-k}(t) (1-\pi)^{n-k} \pi^k \] may include some terms
that are $0$.  For instance, in the preceding table we see that for $n = 4$, 
\[ P(T=3) = \sum_{k=1}^3 \#\Pp_{k,n-k}(t) (1-\pi)^{3-k}\pi^k \]  
since $\#\Pp_{0,4}(t)  = \#\Pp_{4,0}(t) = 0$.  
\end{remark}

The function~\eqref{pmfT} is a pmf since clearly $P(T=t) \geq 0$
for all $t$, and 
\begin{align*}
 \sum_{t=0}^{\lfloor n^2/4 \rfloor} P(T=t) &= 
 \sum_{t=0}^{\lfloor n^2/4 \rfloor} \sum_{k=0}^n \#\Pp_{k,n-k}(t) (1-\pi)^{n-k} 
 \pi^k\\
&= \sum_{k=0}^n  \gp{n}{k}{1} (1-\pi)^{n-k} \pi^k \\
& = \sum_{k=0}^n \binom{n}{k} (1-\pi)^{n-k} \pi^k \\
& = 1. 
 \end{align*}
 
\section{A refined binomial distribution}
To recap, for the experiment with $n$ independent Bernoulli trials 
and constant probability of 
success $\pi$ on each trial, let $Y$ count the number of successes in $n$ trials.
Then $Y\sim\Bin(n,\pi)$.   
In the last section, the random variable $T$ was defined as $T = \sum_{j=1}^k S_j$ 
where $S_j$ denotes the random variable that counts the number of failures that occur 
after the $j$th success.  Equivalently, $T$ counts the number of inversions 
(success before failure pairs) in the outcome of a given instance of the $n$
independent Bernoulli trials, each with probability of success $\pi$.  Next, we explore the
joint distribution of $Y$ and $T$.

\subsection{The joint distribution of $Y$ and $T$}
Having defined the random variables $Y$ and $T$, let us now consider their
joint distribution.

First, a table of the $n=4$ case displaying the outcomes and corresponding
probabilities for all values of $Y$ and $T$ in the support:
\vskip 1cm
\begin{tabular}{r|ccccc}
\hline \hline
    &     &    &  $T$ &  & \\
   $Y$ & 0 & 1 & 2 & 3 & 4 \\
\hline\hline
0 &  FFFF         & --- & --- & --- & --- \\
   &  $(1-\pi)^4$ & 0  & 0   & 0  &  0  \\
\hline
1  &  FFFS        & FFSF  & FSFF & SFFF & --- \\
    & $(1-\pi)^3 \pi$ & $(1-\pi)^3 \pi$ & $(1-\pi)^3 \pi$ & $(1-\pi)^3 \pi$ &  $0$ \\
\hline
   2 & FFSS  & FSFS & SFFS, FSSF & SFSF & SSFF \\
     & $(1-\pi)^2 \pi^2$ & $(1-\pi)^2 \pi^2$ & $2(1-\pi)^2 \pi^2$ & $(1-\pi)^2 \pi^2$ & $(1-
     \pi)^2 \pi^2$ \\ \hline 
 3 & FSSS & SFSS & SSFS & SSSF & --- \\
    & $(1-\pi)\pi^3$ & $(1-\pi)\pi^3$ & $(1-\pi)\pi^3$ & $(1-\pi)\pi^3$ &  $0$ \\
  \hline   
 4 & SSSS & --- &  --- &  --- &  --- \\
    & $\pi^4$ & 0 & 0 & 0 & 0\\
    \hline
\end{tabular}

\begin{align} \label{jointpmf}
P(Y=k, T=t) &= \left(\mbox{co\"eff of  $q^t$ in } \gp nkq \right)  (1-\pi)^{n-k} \pi^k \\
  & = \#\Pp_{k,n-k}(t) (1-\pi)^{n-k} \pi^k \notag
\end{align} 
for $k = 0, 1, 2, \dots, n$ and for each $k$, $t = 0, 1, \dots, k(n-k)$; and
$0$ otherwise.\\

It is immediate that $P(Y=k, T=t) \geq 0$ for all $k$ and $t$.  Also,
\begin{align*}
  \sum_{k=0}^n \sum_{t=0}^{k(n-k)} P(Y=k, T=t) &= 
    \sum_{k=0}^n \sum_{t=0}^{k(n-k)}  \#\Pp_{k,n-k}(t) (1-\pi)^{n-k} \pi^k  \\
    &=\sum_{k=0}^n  (1-\pi)^{n-k} \pi^k \sum_{t=0}^{k(n-k)} \#\Pp_{k,n-k}(t) \\
    & =\sum_{k=0}^n (1-\pi)^{n-k} \pi^k \binom nk\\
    &=1.
\end{align*}
Thus~\eqref{jointpmf} defines a joint pmf.  

The marginal pmf of $T$ is given in~\eqref{pmfT}, and the
marginal pmf of $Y\sim\Bin(n,\pi)$.

\subsection{A summation lemma}
In order to derive various moments,
we will
need
to take certain weighted sums of the co\"efficients of $\gp{n}{k}{q}$.
These will be proved in the following lemma.

\begin{lemma}  Let $n$ and $k$ be fixed nonnegative integers.
Then
 \begin{align}
  \label{lem1} \sum_{j\geq 0} \Big( j \cdot \#\Pp_{k,n-k}(j) \Big) 
 &= \binom{n}{2} \binom{n-2}{k-1}, \\
 \label{lem2}
 \sum_{j\geq 0} \left( j^2\cdot \# \Pp_{k,n-k}(j) \right) &=  \binom{n}{k} \frac{k(n-k)}{12}
\Big( n+1+3k(n-k) \Big). 
 \end{align}
   where we follow the
convention that $\binom{n}{-1} = \binom{n}{n+1} = 0$ for all $n$.
\end{lemma}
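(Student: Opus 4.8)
The plan is to recognize the two sums as derivatives of the Gaussian polynomial evaluated at $q=1$. Writing $P(q) := \gp{n}{k}{q} = \sum_{j\geq 0} \#\Pp_{k,n-k}(j)\, q^j$ by the second equality of~\eqref{key}, term-by-term differentiation gives $\sum_{j} j\cdot\#\Pp_{k,n-k}(j) = P'(1)$ and $\sum_j j(j-1)\cdot\#\Pp_{k,n-k}(j) = P''(1)$, so that
\[ \sum_j j^2\cdot\#\Pp_{k,n-k}(j) = P''(1) + P'(1). \]
Thus it suffices to compute $P'(1)$ and $P''(1)$, and we already know $P(1) = \binom nk$ from the first equality of~\eqref{key} together with~\eqref{jointpmf}.

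To compute these derivatives I would pass to the product form $P(q) = \prod_{i=1}^k \frac{1-q^{n-k+i}}{1-q^i} = \prod_{i=1}^k \frac{[n-k+i]_q}{[i]_q}$, where $[m]_q := 1+q+\cdots+q^{m-1}$, and work with the logarithmic derivative $g := P'/P$. Since $\log P = \sum_{i=1}^k\big(\log[n-k+i]_q - \log[i]_q\big)$, the function $g$ is a sum over the $k$ factors of the single elementary quantity $\tfrac{d}{dq}\log[m]_q$. Evaluating at $q=1$ reduces everything to the three elementary sums $[m]_1 = m$, $[m]_1' = \binom m2$, and $[m]_1'' = 2\binom m3$, obtained by differentiating $1+q+\cdots+q^{m-1}$. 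From these one finds $\tfrac{d}{dq}\log[m]_q\big|_{q=1} = \tfrac{m-1}2$ and $\tfrac{d}{dq}\big(\tfrac{[m]_q'}{[m]_q}\big)\big|_{q=1} = \tfrac{(m-1)(m-5)}{12}$.

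Then $P'(1) = P(1)\,g(1)$ and, since $P'' = P(g^2+g')$, also $P''(1) = P(1)\big(g(1)^2 + g'(1)\big)$. The structure of $g$ makes its values at $q=1$ collapse: summing $\tfrac{(n-k+i)-1}2 - \tfrac{i-1}2 = \tfrac{n-k}2$ over $i=1,\dots,k$ gives $g(1) = \tfrac{k(n-k)}2$, and the analogous sum of second-order terms gives $g'(1) = \tfrac{k(n-k)(n-5)}{12}$. Substituting yields $P'(1) = \binom nk \tfrac{k(n-k)}2$, which is~\eqref{lem1}, and combining $P''(1)+P'(1)$ and factoring out $\binom nk\tfrac{k(n-k)}{12}$ produces the stated second-moment formula~\eqref{lem2}.

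The routine-but-delicate part, and the only real obstacle, is the bookkeeping in $g'(1)$: one must expand $(n-k+i-1)(n-k+i-5)-(i-1)(i-5)$, which simplifies to $(n-k)^2 + 2(n-k)i - 6(n-k)$, and sum over $i$ using $\sum_{i=1}^k i = \binom{k+1}2$; the surviving cross term is exactly what forces the factor $n+1+3k(n-k)$ to appear. Finally I would reconcile the first identity with the stated right-hand side by the elementary rearrangement $\binom nk \tfrac{k(n-k)}2 = \tfrac{n!}{2\,(k-1)!\,(n-k-1)!} = \binom n2\binom{n-2}{k-1}$, with the boundary conventions $\binom{n}{-1}=\binom n{n+1}=0$ absorbing the degenerate cases $k=0$ and $k=n$.
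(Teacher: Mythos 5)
Your proof is correct and follows essentially the same route as the paper's: both interpret the sums as derivatives of the Gaussian polynomial $\gp{n}{k}{q}$ at $q=1$ and compute them via logarithmic differentiation of the product form. Your execution is in fact slightly cleaner and more complete than the paper's---cancelling the $(1-q)$ factors so as to work with the polynomial $q$-integers $[m]_q$ avoids the $0/0$ limits and L'H\^opital's rule that the paper invokes, and you carry out in full the second-moment computation that the paper omits as ``tedious''; your intermediate values $g(1)=\tfrac{k(n-k)}{2}$ and $g'(1)=\tfrac{k(n-k)(n-5)}{12}$, and the final assembly into $\binom{n}{k}\tfrac{k(n-k)}{12}\bigl(n+1+3k(n-k)\bigr)$, all check out.
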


\begin{proof}
Recall that $\#\Pp_{k,n-k}(j)$ is given by the co\"efficient of $q^j$ in
$\gp{n}{k}{q}$.  The desired sum in~\eqref{lem1} is therefore equal to
\[ \frac{d}{dq} \gp{n}{k}{q} \Bigg\vert_{q=1}. \]
Letting 
\[ f(q) = \gp nkq = \prod_{j=1}^k \frac{1-q^{n-k+j}}{1-q^j}, \] and proceeding
by logarithmic differentiation:
\[ \log f(q) = \sum_{j=1}^k \Big( \log(1-q^{n-k+j}) - \log(1-q^j) \Big) , \] thus
\[ \frac{d}{dq} f(q) = \gp nkq \sum_{j=1}^k 
  \left( \frac{j q^j}{1-q^j} - \frac{(n-k+j) q^{n-k+j-1}}
{1-q^{n-k+j}}  \right). \]

To find $f'(1)$, put the $j$ term over a common denominator, apply
L'H\^{o}pital's rule to find the limit as $q\to 1$, and finally obtain
\begin{equation*} 
 f'(1) = \binom{n}{k} \sum_{j=1}^k \frac{n-k}{2} = \binom{n}{k} \frac{(n-k)k}{2} = 
\binom{n}{2}\binom{n-2}{k-1}. 
\end{equation*}

Next, the desired sum in~\eqref{lem2} is equal to
\[  \frac{d}{dq} \left( q \frac{d}{dq} \gp nkq \right) \Bigg | _{q=1}. \]
The details of the derivation are straightforward, as in~\eqref{lem1}, but even more
tedious to do by hand; and therefore omitted.
\end{proof}

\subsection{Various moments}

\begin{theorem} \label{ET} The first two raw moments of the random variable $T$ 
defined by the pmf~\eqref{pmfT} are
given by
   \[ E(T) = \binom{n}{2}\pi(1-\pi)\] and
   \[ E(T^2) = \binom{n}{2}\pi(1-\pi) \left( \frac{2n-1}{3} + \binom{n-2}{2} \pi(1-\pi)   \right),
   \]
and thus the variance
 \[ V(T) = \binom{n}{2}\pi(1-\pi) \left( \frac{2n-1}{3} - \pi(1-\pi)(2n-3) \right).   \]
\end{theorem}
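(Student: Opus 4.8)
The plan is to compute each moment directly from its definition, pushing all of the $q$-dependence into the summation lemma. Writing
\[ E(T) = \sum_t t\,P(T=t) = \sum_{k=0}^n (1-\pi)^{n-k}\pi^k \sum_t t\,\#\Pp_{k,n-k}(t), \]
I would interchange the order of summation (legitimate, since every term is nonnegative and the sums are finite) and apply the first part of the lemma, \eqref{lem1}, to the inner sum. Using the identity $\binom{n}{2}\binom{n-2}{k-1} = \binom{n}{k}\tfrac{k(n-k)}{2}$ already noted in the lemma's proof, the outer sum becomes $\binom{n}{2}$ times a binomial expectation, which collapses to $\binom{n}{2}\pi(1-\pi)$ by the binomial theorem after the index shift $k \mapsto k-1$. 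As a sanity check, the same value follows from linearity of expectation: $T$ counts success-before-failure pairs, and each of the $\binom{n}{2}$ ordered pairs of trial positions contributes a success-then-failure event with probability $\pi(1-\pi)$.

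For $E(T^2)$ I would proceed identically, now invoking the second part of the lemma, \eqref{lem2}, to obtain
\[ E(T^2) = \frac{1}{12}\sum_{k=0}^n \binom{n}{k}(1-\pi)^{n-k}\pi^k\, k(n-k)\big(n+1+3k(n-k)\big). \]
Recognizing $\binom{n}{k}(1-\pi)^{n-k}\pi^k$ as the pmf of $Y\sim\Bin(n,\pi)$, this equals $\tfrac{1}{12}\big((n+1)E[Y(n-Y)] + 3E[(Y(n-Y))^2]\big)$. The first expectation is $E[Y(n-Y)] = n(n-1)\pi(1-\pi)$, coming from $E[Y]=n\pi$ and $E[Y^2]=n(n-1)\pi^2+n\pi$. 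The second requires the third and fourth binomial moments; the cleanest route is through falling factorials, using $E[Y(Y-1)\cdots(Y-r+1)] = n(n-1)\cdots(n-r+1)\pi^r$ together with the Stirling expansions $Y^2 = Y^{(1)}+Y^{(2)}$, $Y^3 = Y^{(1)}+3Y^{(2)}+Y^{(3)}$, and $Y^4 = Y^{(1)}+7Y^{(2)}+6Y^{(3)}+Y^{(4)}$. Expanding $(Y(n-Y))^2 = n^2Y^2 - 2nY^3 + Y^4$ and collecting the coefficient of each factorial moment yields a polynomial in $\pi$ which, after factoring, simplifies to the stated closed form.

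The main obstacle is precisely this last collection-and-simplification step for $E(T^2)$: after substitution one obtains a degree-four polynomial in $\pi$, and the work lies in verifying that it factors through $\pi(1-\pi)$ into $\binom{n}{2}\pi(1-\pi)\big(\tfrac{2n-1}{3} + \binom{n-2}{2}\pi(1-\pi)\big)$. This is routine but bookkeeping-heavy—exactly the calculation the lemma's proof deferred as ``tedious to do by hand''—and I would organize it by tracking the coefficient of each falling-factorial moment separately, exploiting the symmetry $Y \leftrightarrow n-Y$ (equivalently $\pi \leftrightarrow 1-\pi$) of $Y(n-Y)$ to confirm that only even powers of $(1-2\pi)$, i.e.\ powers of $\pi(1-\pi)$, can survive.

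Finally, for the variance I would use $V(T) = E(T^2) - E(T)^2$. Substituting the two computed moments and factoring out $\binom{n}{2}\pi(1-\pi)$ leaves the bracket $\tfrac{2n-1}{3} + \big(\binom{n-2}{2} - \binom{n}{2}\big)\pi(1-\pi)$, whereupon the identity $\binom{n-2}{2} - \binom{n}{2} = -(2n-3)$ delivers the stated $V(T)$.
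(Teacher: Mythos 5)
Your proposal is correct, and it follows the paper's skeleton exactly up to the final summation step: both arguments expand the moment, interchange the order of summation, and feed the inner sum into the lemma, and for $E(T)$ the two derivations coincide (factor out $\binom{n}{2}\pi(1-\pi)$, shift the index, collapse by the binomial theorem). Where you genuinely diverge is the evaluation of the $E(T^2)$ sum: the paper simply declares that the closed form ``follows by hypergeometric summation,'' citing Petkov\v{s}ek--Wilf--Zeilberger, i.e.\ it delegates the identity to algorithmic (computer-algebra) machinery, whereas you evaluate it by hand, recognizing the sum as $\tfrac{1}{12}\bigl((n+1)E[Y(n-Y)]+3E[(Y(n-Y))^2]\bigr)$ for $Y\sim\Bin(n,\pi)$ and computing these binomial moments via factorial moments and the Stirling expansions of $Y^2,Y^3,Y^4$. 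The bookkeeping you defer does work out: one finds $E[Y^2(n-Y)^2]=n(n-1)^2\pi(1-\pi)+n(n-1)(n-2)(n-3)\pi^2(1-\pi)^2$, which assembles into the stated formula, so your route is a complete, elementary, self-contained replacement for the paper's black-box citation---at the cost of a degree-four polynomial computation that the paper avoids. Your linearity-of-expectation check of $E(T)$ is a further genuine addition: since $T$ counts inversions, each of the $\binom{n}{2}$ position pairs contributes a success-then-failure event of probability $\pi(1-\pi)$, which proves the first moment outright without the lemma (and, pushed to pairs of pairs, could even recover $E(T^2)$ combinatorially). The variance step is the same in both treatments; you merely make explicit the identity $\binom{n-2}{2}-\binom{n}{2}=-(2n-3)$ that the paper leaves to the reader.
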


\begin{proof}
\begin{align*}
E(T) &=  \sum_{t=0}^{\lfloor n^2/4 \rfloor} t \ P(T=t) \\ &= 
 \sum_{t=0}^{\lfloor n^2/4 \rfloor} \sum_{k=0}^n  t \#\Pp_{k,n-k}(t) (1-\pi)^{n-k} 
 \pi^k\\
& = \sum_{k=0}^n (1-\pi)^{n-k} \pi^k \sum_{t=0}^{k(n-k)} t\ \#\Pp_{k,n-k}(t) \\
& = \sum_{k=0}^n \binom{n}{2} \binom{n-2}{k-1} (1-\pi)^{n-k} \pi^k \mbox{  (by 
~\eqref{lem1})}\\
& = \binom{n}{2}\pi(1-\pi)\sum_{k=1}^{n-2} \binom{n-2}{k-1} (1-\pi)^{n-k-1}\pi^{k-1}\\
& =  \binom{n}{2} \pi(1-\pi).
 \end{align*}

\begin{align*}
E(T^2) &=  \sum_{t=0}^{\lfloor n^2/4 \rfloor } t^2 \ P(T=t) \\ &= 
 \sum_{t=0}^{\lfloor n^2/4 \rfloor } \sum_{k=0}^n  t^2 \#\Pp_{k,n-k}(t) (1-\pi)^{n-
k}  \pi^k\\
& = \sum_{k=0}^n  \binom{n}{k} \frac{k(n-k)}{12}
\Big( n+1+3k(n-k) \Big) (1-\pi)^{n-k} \pi^k \mbox{  (by 
~\eqref{lem2})}\\
& = \binom{n}{2}\pi(1-\pi) \left( \frac{2n-1}{3} + \binom{n-2}{2} \pi(1-\pi) \right),
 \end{align*}
where the last equality follows by 
hypergeometric summation~\cite[see, e.g.][]{PWZ96}.
 
 One can derive $V(T)$ in the usual way as $E(T^2) - \left( E(T) \right)^2$.  
\end{proof}

\begin{theorem} For the distribution defined by joint pmf~\eqref{jointpmf},
 \begin{align}
    E(YT) &= \binom n2 \pi(1-\pi)\Big( \pi(n-2) + 1\Big), \label{EYT}\\
    \Cov(Y,T) & = \binom n2 \pi(1-\pi) (1-2\pi)  \label{CovYT}.
 \end{align}
\end{theorem}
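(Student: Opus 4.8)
The plan is to compute $E(YT)$ directly from the joint pmf \eqref{jointpmf} and then recover $\Cov(Y,T)$ from the standard identity $\Cov(Y,T) = E(YT) - E(Y)E(T)$, using the marginal moments already in hand. First I would write
\[ E(YT) = \sum_{k=0}^n \sum_{t=0}^{k(n-k)} kt\, P(Y=k,T=t) = \sum_{k=0}^n k(1-\pi)^{n-k}\pi^k \sum_{t=0}^{k(n-k)} t\,\#\Pp_{k,n-k}(t), \]
interchanging the order of summation so that the inner $t$-sum is exactly the weighted coefficient sum evaluated in \eqref{lem1}. Substituting $\sum_t t\,\#\Pp_{k,n-k}(t) = \binom{n}{2}\binom{n-2}{k-1}$ collapses the double sum to the single sum
\[ E(YT) = \binom{n}{2} \sum_{k=1}^{n} k\binom{n-2}{k-1}(1-\pi)^{n-k}\pi^k. \]

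The real work is then evaluating this single sum over $k$, which is where I expect the only genuine obstacle to lie. I would reindex with $j = k-1$, factor out one power of $\pi$, and write $k = j+1$ to split the sum into a term carrying a factor $j$ and a plain binomial term. The plain term is a binomial expansion of $((1-\pi)+\pi)^{n-2}$ and contributes $(1-\pi)$; for the term carrying $j$ I would apply the absorption identity $j\binom{n-2}{j} = (n-2)\binom{n-3}{j-1}$, reindex once more, and recognize a second binomial expansion, which contributes $(n-2)\pi(1-\pi)$. Collecting the two pieces gives $E(YT) = \binom{n}{2}\pi(1-\pi)\big(\pi(n-2)+1\big)$, establishing \eqref{EYT}. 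Equivalently, one could invoke the hypergeometric summation machinery cited for Theorem~\ref{ET}, but the two successive applications of the binomial theorem are more transparent here.

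Finally, for \eqref{CovYT} I would use that the marginal distribution of $Y$ is $\Bin(n,\pi)$, so $E(Y) = n\pi$, together with $E(T) = \binom{n}{2}\pi(1-\pi)$ from Theorem~\ref{ET}. Then
\[ \Cov(Y,T) = E(YT) - E(Y)E(T) = \binom{n}{2}\pi(1-\pi)\big(\pi(n-2)+1 - n\pi\big), \]
and the bracket simplifies to $1-2\pi$, yielding $\Cov(Y,T) = \binom{n}{2}\pi(1-\pi)(1-2\pi)$. This last step is elementary once \eqref{EYT} is secured, since all the summation difficulty is concentrated in the evaluation of $E(YT)$.
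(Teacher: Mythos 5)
Your proposal is correct and follows essentially the same route as the paper: interchange the order of summation, apply \eqref{lem1} to the inner sum over $t$, and obtain \eqref{CovYT} from $\Cov(Y,T)=E(YT)-E(Y)E(T)$ with $E(Y)=n\pi$ and $E(T)=\binom n2\pi(1-\pi)$ from Theorem~\ref{ET}. The one point of divergence is the evaluation of the remaining single sum $\sum_{k} k\binom{n-2}{k-1}(1-\pi)^{n-k}\pi^k$: the paper disposes of it with a citation to hypergeometric summation, whereas you evaluate it by hand (reindexing, the absorption identity $j\binom{n-2}{j}=(n-2)\binom{n-3}{j-1}$, and two applications of the binomial theorem), arriving at the same value $\binom n2\pi(1-\pi)\bigl(\pi(n-2)+1\bigr)$; this makes your argument self-contained where the paper's is a black-box appeal, but the underlying decomposition and key lemma are identical.
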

\begin{proof}
 \begin{align*}
 E(YT) &= \sum_{k=0}^n \sum_{t=0}^{k(n-k)} k t \#\Pp_{k,n-k}(t) (1-\pi)^{n-k} \pi^k \\
   & =  \sum_{k=0}^n k (1-\pi)^{n-k} \pi^k \sum_{t=0}^{k(n-k)} t \ \#\Pp_{k,n-k}(t) \\
   & = \sum_{k=0}^n k (1-\pi)^{n-k}  \pi^k
   \binom n2 \binom{n-2}{k-1} \mbox{ (by~\eqref{lem1})}\\
   & = \binom{n}{2} \pi(1-\pi) (1 - 2\pi + n\pi) \mbox{  (by hypergeometric summation)}.
 \end{align*}
 Then $\Cov(Y,T) = E(Y T) - E(Y) E(T).$
\end{proof}

\subsection{The conditional distribution of $T$ given $Y$}
Letting $n$ and $k$ be given, the pmf of the conditional distribution of $T$
given $Y=k$ is given by
\[ P(T = t\ |\ Y=k) = \frac{ \# \Pp_{k,n-k}(t) }{ \binom nk} 
= \binom nk^{-1} \cdot \mbox{ co\"eff of $q^t$ in} \gp nkq .\]
\begin{theorem} \label{CondMoments}
The conditional expectation \[ E(T\ | \ Y=k) = \frac{k(n-k)}{2} \] and
the conditional variance
\[ V(T\ | \ Y=k) = \frac{k(n-k)(n+1)}{12}. \]
\end{theorem}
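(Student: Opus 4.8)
The plan is to compute both conditional moments directly from the conditional pmf $P(T=t\mid Y=k)=\#\Pp_{k,n-k}(t)/\binom nk$, reducing everything to the two weighted sums already established in the summation lemma, \eqref{lem1} and \eqref{lem2}. The crucial simplification---and the reason the conditional moments come out so much cleaner than the marginal moments in Theorem~\ref{ET}---is that the factor $\binom nk^{-1}$ in the conditional pmf exactly cancels the combinatorial weight $\binom nk$ appearing in the lemma, so no hypergeometric summation is needed here at all.

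First I would write
\[ E(T\mid Y=k)=\sum_{t=0}^{k(n-k)} t\,P(T=t\mid Y=k)=\frac{1}{\binom nk}\sum_{t\geq 0} t\,\#\Pp_{k,n-k}(t). \]
Applying \eqref{lem1} turns the sum into $\binom n2\binom{n-2}{k-1}$, and then a short binomial-coefficient manipulation---rewriting $\binom n2\binom{n-2}{k-1}$ as $\tfrac{n!}{2(k-1)!(n-k-1)!}$ and dividing by $\binom nk=\tfrac{n!}{k!(n-k)!}$---collapses the quotient to $\tfrac{k(n-k)}{2}$, as claimed.

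For the variance I would first obtain the conditional second moment in the same fashion,
\[ E(T^2\mid Y=k)=\frac{1}{\binom nk}\sum_{t\geq 0} t^2\,\#\Pp_{k,n-k}(t)=\frac{k(n-k)}{12}\bigl(n+1+3k(n-k)\bigr), \]
where \eqref{lem2} is invoked and the $\binom nk$ cancels outright. Then $V(T\mid Y=k)=E(T^2\mid Y=k)-\bigl(E(T\mid Y=k)\bigr)^2$; substituting the two expressions above, the term $3k^2(n-k)^2/12$ coming from the second moment cancels the term $\bigl(k(n-k)/2\bigr)^2=k^2(n-k)^2/4$ coming from the squared mean, leaving precisely $\tfrac{k(n-k)(n+1)}{12}$.

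I expect no genuine obstacle here: once the lemma is in hand, the entire argument is the cancellation of $\binom nk$ followed by elementary binomial arithmetic and one cancellation in the variance computation. If anything merits care, it is only confirming that writing $\sum_{t\geq 0}$ in place of $\sum_{t=0}^{k(n-k)}$ is harmless, which is immediate since $\#\Pp_{k,n-k}(t)=0$ for $t>k(n-k)$ (the degree of $\gp nkq$).
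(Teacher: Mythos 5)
Your proposal is correct and follows essentially the same route as the paper: the conditional pmf $P(T=t\mid Y=k)=\#\Pp_{k,n-k}(t)/\binom nk$, an application of \eqref{lem1} and \eqref{lem2}, cancellation of the $\binom nk$ factors, and the variance via $E(T^2\mid Y=k)-\bigl(E(T\mid Y=k)\bigr)^2$. You merely spell out the binomial-coefficient arithmetic and the final cancellation that the paper leaves implicit.
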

\begin{proof}
\begin{align*} 
E(T\ |\ Y=k) &= \binom nk^{-1} \sum_{t=0}^{\lfloor n^2/4 \rfloor} t \cdot \#\Pp_{k,n-k}(t)\\
& = \binom nk^{-1} \frac{d}{dq} \gp nkq \Bigg|_{q=1} \\
& \overset{\mbox\small{~\eqref{lem1}}}= \binom nk^{-1} \binom n2 \binom{n-2}{k-1}
\\ &= \frac{k(n-k)}{2}.
\end{align*}
\begin{align*}
E(T^2\ |\ Y=k) &= \binom nk^{-1} \sum_{t=0}^{\lfloor n^2/4 \rfloor} t^2 \cdot
\#\Pp_{k,n-k}(t) \\
& \overset{~\eqref{lem2}}{=} \frac{k(n-k)}{12} \Big( n+1+3k(n-k) \Big).  
\end{align*}
\end{proof}

\subsection{The conditional distribution of $Y$ given $T$}
Letting $n$ and $t$ be given, the pmf of the conditional distribution of $Y$
given $T=t$ is given by
\begin{align*}
P(Y=k\ | \ T=t) & = \frac{ \#\Pp_{k,n-k}(t) \pi^k (1-\pi)^{n-k}  }
   { \sum_{j} \#\Pp_{j,n-j}(t) \pi^j (1-\pi)^{n-j} } \\
   & = \frac{ \#\Pp_{k,n-k}(t) \theta^k }
   { \sum_{j} \#\Pp_{j,n-j}(t) \theta^j },
 \end{align*}
where \begin{equation} \label{theta} \theta = \frac{\pi}{1-\pi} . \end{equation}
It is thus clear that the conditional distribution of $Y$ given $T=t$ is considerably
messier than the expressions encountered thus far.
For example, in the $n=4$ case, 
\begin{align*}
P(Y=2\ |\ T=3) &= \frac{(1-\pi)^2\pi^2}
{(1-\pi)^3\pi + (1-\pi)^2\pi^2 + (1-\pi)\pi^3} \\
& =  \frac{(1-\pi)\pi}{  (1-\pi)^2 + (1-\pi)\pi + \pi^2  }\\
& = \frac{ (1-\pi)\pi }{1-\pi + \pi^2},
\end{align*} where the algebraic simplifications undertaken here
do not generalize to 
arbitrary $n$, $k$, and $t$.

An expression for the $r$th raw moment of $Y$ given $T=t$,
\begin{align*}
E( Y^r | T=t) &= \sum_k  k^r \frac{\#\Pp_{k,n-k}(t) \pi^k (1-\pi)^{n-k} }
{ \sum_j  \#\Pp_{j,n-j}(t) \pi^j (1-\pi)^{n-j} } \\
& = \frac{ \sum_k k^r \cdot \#\Pp_{k,n-k}(t) \ \theta^k }
              { \sum_j \#\Pp_{j,n-j}(t) \  \theta^j },
\end{align*} with $\theta$ as in~\eqref{theta}, 
is therefore not particularly enlightening.
\subsection{Example/Application}
One way to think of our random variable $T$
is as a measure of homogeneity in the sense described as follows.  
Consider the classic example of tossing a fair coin $n=15$ times.
Suppose that $6$ of these tosses come up heads.  From the classical binomial
distribution we know that there are $\binom{15}{6} = 5005$ different $15$-tuples
that consist of $6$ heads and $9$ tails.  But if we further allow the $t$ 
to encode additional information about the where the $6$ heads and
$9$ tails appear in the tuple, we may observe the following.  The possible values of
$t$ in this example are $0$ through $54$ inclusive, because the degree of
the Gaussian polynomial $\gp nkq$ is $k(n-k) = 6(15-6) = 54$ and none of the
co\"efficents of $q$ in  
the Gaussian polynomial vanish.   By Theorem~\ref{CondMoments},
the mean value of $T$ given $6$ successes is $27$.  A value of $T$ close to $27$
indicates that the heads and tails are very well mixed together, i.e. are rather
homogeneous.  On the other hand, value of $T$ close to $0$ indicates that nearly 
all of the tails occurred up front, with nearly all of the heads near the end.  
A value of $T$ close to the maximum ($54$ in this example) indicate the opposite:
that nearly all of the heads occurred in the early trials and nearly all of the tails
occurred in the later trials.

\section{Conclusion}
In this paper, we consider interpreting the Potter--Sch\"utzenberger 
quantum binomial theorem, and a generalization thereof, as
providing a refinement of the binomial probability distribution, in 
which additional information
is preserved 
about the sequence of successes and failures in the in the underlying
binomial experiment.  

 It seems plausible that
other discrete probability distributions could be refined in an analogous way.
This possibility will be explored in future work.

\section*{Acknowledgments}
The author thanks George Andrews, 
Charles Champ, Broderick Oluyede, Robert Schneider, Divine
Wanduku, and Doron Zeilberger for encouragement, and helpful 
conversations and suggestions relating to this work.  Additionally, the author thanks 
Michael~\citet[p. 1]{S20}  for
drawing his attention to the work of H. S. A. Potter.  Finally, the author thanks
the anonymous referees for carefully reading the manuscript,
catching a substantial error in an earlier draft, and offering numerous helpful
suggestions.
 
\bibliographystyle{natbib-harv}

\begin{thebibliography}{}

\bibitem[Andrews, 1976]{A76} 
Andrews, G. E., 1976.
\emph{The Theory of Partitions}, Addison--Wesley.
(Reissued, Cambridge University Press, 1998.)

\bibitem[Andrews et al., 1999]{AAR99} 
Andrews, G. E., R. Askey, and R. Roy,  1999.
\emph{Special Functions}, 
Cambridge University Press, 1999.

\bibitem[Beattie et al., 2002]{BDR02}
Beattie, M., S. D\u{a}sc\u{a}lescu, and \c{S}. Raianu.
Lifting of Nichols algebras of type $B_2$,
\emph{Israel J. Math.} 132, 1--28.



\bibitem[Dunkl, 1981]{D81}
Dunkl, C. F., 2001.
The absorption distribution and the $q$-binomial theorem,
\emph{Comm. Math. Stat. Theory Meth.} 10, 1915--1920.

\bibitem[Gasper and Rahman, 2004]{GR04} 
Gasper, G. and M. Rahman, 2004.
\emph{Basic Hypergeometric Series}, 2nd ed.,
Cambridge University Press.

\bibitem[Kemp, 1987]{K87}
Kemp, A. W., 1987.
A Poissonian binomial with constrained parameters,
\emph{Naval Research Logistics Quarterly} 34, 853--858.

\bibitem[Kemp, 2002]{K02}
Kemp, A. W., 2002.
Certain $q$-analogs of the binomial distribution,
\emph{SANKHY\={A}: The Indian J. Statistics, San Antonio: Selected Articles}
64 Series A, Part 2, 293--305.

\bibitem[Kemp and Kemp, 1991]{KK91}
Kemp, A. W., and C. Kemp, 1991.  Weldon's dice data revisited,
\emph{Amer. Statistician} 45, 216--222.

\bibitem[Kim, 2012]{K12}
Kim, T., 2012.
Some formulae for the $q$-Bernstein polynomials and $q$-deformed binomial
distributions,
\emph{J. Computational Analysis and Appl.} 2012, 917--933.

\bibitem[Petkov\v{s}ek et al., 1996]{PWZ96}
Petkov\v{s}ek, M., H. S. Wilf, and D. Zeilberger, 1996.
\emph{A=B},  A. K. Peters.

\bibitem[Potter, 1950]{P50} 
Potter, H. S. A., 1950.
On the latent roots of quasi-commutative matrices,
\emph{Amer. Math. Monthly} 57, 321--322.

\bibitem[Schlosser, 2020]{S20}
Schlosser, M. J., 2020.
A noncommutative weight-dependent generalization of the binomial theorem,
\emph{S\'{e}m. Lothar. Combin.} 81, Article B81j, 24 pp.

\bibitem[Sch\"utzenberger, 1953]{S53}
Sch\"utzenberger, M. P., 1953.
Une interpr\'etation de certaines solutions de l'\'equation
fonctionelle: $F(x+y) = F(x) F(y)$, 
\emph{C. R. Acad. Sci Paris} 236, 352--353.

\bibitem[Sicong, 1994]{S94}
Sicong, J., 1994.  
The $q$-deformed binomial distribution and its asymptotic behavior,
\emph{J. Phys. A: Math. Gen.} 27, 493--499.



\end{thebibliography}

\end{document}